\documentclass[reqno]{amsart}

\begin{document}
\title{Painlev\'e's determinateness theorem extended to proper coverings}

\author{Claudi Meneghin} 

\address{Claudi Meneghin (Isrc) - 
Fermo Posta Chiasso 1, \newline CH-6830 Chiasso - Switzerland}
\email{claudi.meneghin@gmail.com}

\subjclass[2000]{34M35, 34M45}
\keywords{Painlev\'e's 
determinateness theorem, Ordinary differential equations in the complex domain, Cauchy's problem, Analytic continuation,
singularities, Multi-valuedness, Riemann surface, Riemann domain}

\font\sdopp=msbm10 
\font\sdoppet=msbm8
\font\cir=wncyb10
\def\ER {\sdopp {\hbox{R}}}
\def\CI {\sdopp {\hbox{C}}}
\def\Cit {\sdoppet {\hbox{C}}}

\begin{abstract}
We extend Painlev\'e's determinateness theorem
to the case of first order ordinary differential equations in the complex domain
with known terms allowed be multivalued in the dependent variable as well;
multivaluedness is supposed to be resolved by proper coverings.
\end{abstract}

\maketitle
\numberwithin{equation}{section}
\newtheorem{theorem}{Theorem}[section]
\allowdisplaybreaks

\bibliographystyle{plain}

\def\QUAN{\vrule height6pt width6pt depth0pt}
\newtheorem{definition}{Definition}
\newtheorem{lemma}[definition]{Lemma}

\font\sdopp=msbm10 
\font\sdoppet=msbm8
\font\cir=wncyb10
\def\ER {\sdopp {\hbox{R}}}
\def\CI {\sdopp {\hbox{C}}}
\def\Cit {\sdoppet {\hbox{C}}}
\def\DI {\sdopp {\hbox{D}}}
\def\EN{\sdopp {\hbox{N}}}
\def\ZETA{\sdopp {\hbox{Z}}}
\def\pen{\sdoppet {\hbox{N}}}
\def\PI {\sdopp {\hbox{P}}}
\def\R{\Delta} 
\def\NORM{\hbox{\boldmath{}$\vert\vert$\unboldmath}}
\font\cir=wncyb10
\def\Iu{\mathcal{U}}
\def\Ze{\cir\hbox{Z}}
\def\pe{\cir\hbox{P}}
\def\Ef{\cir\hbox{F}}
\def\M{\hbox{\tt\large M}}

\section{Foreword}
What is generally referred to (see e.g.$\!$ \cite{hille}, th.3.3.2) as {\bf Painlev\'e's Determinateness Theorem} for 
first order ordinary differential equations in the complex domain states: 
\vskip0.1cm
{\it
If $F(z,w)$ is a rational function of $w$ with coefficients which are algebraic functions of $z$, then any movable singularities of the solutions to the first order ODE $w' = F(z,w)$ are poles and/or algebraic branch points.
}
\vskip0.1cm

The so called 'known term' $F$ is required to be 'single-valued' in the 'dependent variable' $w$; the goal of this note is to allow $F$ 
to be 'multivalued' in $w$ as well and conclude that none of the movable singularities are essential notwithstanding. 
The multivaluedness of $F$ will be resolved by passing to a  Riemann domain $(\Delta,p)$ over $\CI^2$ minus a complex-analytic curve,
with the main assumption that $(\Delta, p)$ be {\it proper cover} of $p(\Delta)$.
This hypothesis cannot be dropped in general, as example (\ref{noproper}) shows.
More singularities (not necessarily poles) for $F$ will be allowed on a complex-analytic curve in $\Delta$.

Previous statements of Painlev\'e's theorem (\cite{painleve'}, p.38, \cite{picard}, p.327/328, \cite{ince}, p.292, \cite{hille}, th.3.3.1) 
read as follows:
\vskip0.1cm
{\it
If a solution of the first order ODE $w' = F(z,w)$ is continued analytically along a rectifiable arc from $z=z_0$ to $z=z_1$ 
avoiding the set $S$ of fixed singularities, and if $z_1\not\in S$, then the solution tends to a definite limit, finite or infinite, as $z\to z_1$.
}
\vskip0.1cm

The two statements of Painlev\'e's determinatess theorem are equivalent under the assumption about the known term $F$ (see the references here above); 
since we make a broader hypothesis, we can no longer take equivalence for granted; in particular, logarithmic branch points in the solutions cannot be 
excluded, as pointed out in sect.$\!$ 3.1 of \cite{hille} (see also example \ref{Solution with logarithmic singularity} in this article). 
In this note we generalise this latter version, but the final remark shows that movable essential singularities can be ruled out anyway;
the question whether or not, in our broader setting, natural boundaries could arise will be the object of future investigations.

This note does not require Bieberbach's (\cite{bieberbach}, quoted in \cite{steinmetz}) precise definition of a fixed or of a movable singularity;
we will use these notions in an informal fashion (like in \cite{hille,ince, painleve',picard}), within the examples only.

Before stating and proving our main theorem,
we now introduce some terminology and discuss some examples.

\section{Terminology}

A {\it Riemann domain} over a region 
$\mathcal{U}\subset\CI^n$ is a complex manifold $\R$ with
an everywhere maximum-rank holomorphic surjective mapping 
$p:\R\rightarrow\mathcal{U}$; $\R$ is {\it proper} provided that so is $p$
(see \cite{gunros} p.43); a curve $\mathcal{S}\subset\CI^n$ is {\it complex-analytic}
provided that it is the common zero set of $N-1$ complex-analytic functions on $\CI^n$; 
when $n=1$ we talk about {\it Riemann surfaces}. 

\begin{definition}
Let $\M$ be a complex manifold, $U$ an open set in $\CI^n$, $f:U\to\M$ an holomorphic mapping: a 
{\rm regular analytic continuation} of the holomorphic mapping element $\left(U,f\right)$, is a quadruple 
$\left(S,\pi,j,F\right)$ such that:
{\bf 1)} $S$ is a connected Riemann domain over a region 
in $\CI^n$; 
{\bf 2)} $\pi\,\colon\, S\rightarrow \CI^n$ is an 
everywhere nondegenerate holomorphic mapping such 
that $U\subset \pi(S)$;
{\bf 3)}
$j\,\colon\, U\rightarrow S$ is a holomorphic immersion such 
that $\pi\circ j=id\vert_{U}$;
{\bf 4)}
$F\,\colon\, S\rightarrow \M$ is a holomorphic mapping such 
that $F\circ j=f$.
\label{continuation}
Let $\gamma:I\to\CI^n$ be an arc 
{\rm (}with $I=[0,1]$ or $I=[0,1)${\rm )}
such that $\gamma (0)=X$; a {\rm regular analytic continuation along $\gamma$} 
of $\left(U,f\right)$ is a regular analytic continuation $\left(S,\pi,j,F\right)$
of $\left(U,f\right)$ such that there exists an arc
$\widetilde \gamma:I\to S $ with 
$\pi \circ \widetilde \gamma=\gamma $.
\label{arc continuation}
\end{definition}

\subsection{Cauchy's problems with multivalued known terms}
\label{multivalued known terms}
Let us now focus on differential equations whose
'known terms' are defined on Riemann domains rather than just on open sets
in $\CI^2$. Introduce the following:
\begin{itemize}
\item a complex-analytic curve
$\mathcal{B}=\{(z,w)\in\CI^2\colon B(z,w)=0\}$, 
with $ B$ holomorphic on $\CI^2$, called the {\bf branch
locus} of the differential equation, and a proper Riemann domain
$(\Delta , p)$ over $\CI^2\setminus\mathcal{B}$. Note that we do not 
require $\mathcal{B}$ to be algebraic. 
\item
a complex one-dimensional submanifold $\Sigma \subset \Delta$,
such that 
$p(\Sigma)$ 
is included 
in a complex-analytic curve $\Lambda (z,w)=0$ in $\CI^2$;
 $p(\Sigma )$ will be referred to
as the {\bf singularity locus}; 
\item
the branch and the singularity loci will be collectively 
referred to as {\bf the singularities of the differential
equation};
\item
a holomorphic function $F$ on $\Delta \setminus \Sigma$,
called the {\bf known term};
\item
a point $X_0\in \Delta \setminus{\Sigma }$, 
with $(z_0,w_0):=p(X_0)$; we will refer to $w_0$ as the {\bf initial value}
of the Cauchy problem and to $z_0$ as the {\bf initial point}; we will also  
refer to $(z_0,w_0)$ collectively as the {\bf initial values};
\item
a local inverse $\eta $ of $p$, defined  in a 
bidisc $\DI_1\times\DI_2 $ around $(z_0,w_0)$.
\end{itemize}

The above definitions are meant to be referred to a differential equation
(or to an associated Cauchy problem) and not to its solutions.

\section{Examples}
In the realm of practice, the usual symbols of 'multivalued
functions' such as '$\log$' or '$\sqrt{\ }$' will
go on to be used as well as the attributes '{\it multi-valued}' or '{\it single-valued}'.
This is perfectly rigorous (even by a geometric point of view), inasmuch as the underlying 
machinery of analytic continuation is understood; in particular, a branch of the multivalued known term
will always have to be specified alongside the initial conditions, i.e., a local inverse $\eta$ of the covering map 
$p:\Delta\to\CI^2$ 
will have to be explicitely chosen there.

\subsection{Attaining singularities of the known term}
\label{leading}

$\!$Consider the following Cauchy problem: 
$$
\begin{cases}
\displaystyle
w'(z)={\sqrt{(1-w^2(z))}\ \frac{w(z)}{\sin z}} \cr
w(\pi/4 )=\sqrt{2}/2.
\end{cases}
$$ 

Here we understand the choice of the positive branch of the square root
corresponding to the initial values 
$(\pi/4, \sqrt{2}/2)$.

In the terminology of section \ref{multivalued known terms}, we have that:
\begin{itemize}
\item 
the branch locus is $\mathcal{B}=\{(z,w)\in\CI^2: {w=\pm 1\}}$, 
the Riemann domain of the known term is 
$\Delta=\{(z,w,y)\in\CI^3:w^2+y^2=1,\ y\not=0\} $, with the projection mapping
$p(z,w,y)=(z,w)$, a twofold covering, hence a proper mapping;
\item
the singularity locus is $\Sigma = \{(z,w,y)\in\Delta:z=k\pi, k\in\ZETA\}$;
\item
the known term $F:\Delta \setminus \Sigma\to\CI$ is defined by 
$F(z,w,y)=yw/\sin(z)$;
\item
the lifted initial point is $X_0=(\pi/4, \sqrt{2}/2, \sqrt{2}/2)\in\Delta\setminus\Sigma$; 
note that $p(X_0)=(z_0,w_0)=(\pi/4, \sqrt{2}/2)$;
\item
$\eta (z,w)= (z,w,\sqrt{1-w^2})$, where the positive branch of the square root has been
chosen.
\end{itemize}
The singularities of the equation in the underlying $\CI^2$ lie on 
$\{z=k\pi\}\cup\{w=\pm 1\}$.
The problem is solved by the entire function
$w(z)=\sin(z)$ (clearly admitting analytic continuation and, a fortiori, limit, everywhere in $\CI$).

Note that the multivaluedness in $w$ of the known term  of this Cauchy problem
makes it attain singularities along the graph of the solution,
more precisely at $z=\frac{\pi }{2}+k\pi$.
This fact does not affect the analytic continuation of the solution since 
the above singularities can be avoided by continuing along a suitable real arc; compare
the argumentation following (\ref{discrete}). 

\subsection{A problem with essential singularities}
The Cauchy problem 
$$
\begin{cases}
\displaystyle
w'(z)=({e^{z\cdot w(z)}+1})^{-1}
\left(
{e^{-z\cdot w(z)}}
-w(z)
\right)
e^{\,({e^{z\cdot w(z)}-z})^{-1}+1}
\cr
w(2)=0
\end{cases}
$$ 
is solved by
$w(z)=[\log(z-1)]/z$. 
The known term of this problem is single valued on
$\CI^2$, has poles on the complex-analytic 
curve ${e^{wz}=-1}$ and essential singularities on 
${e^{wz}=z}$. No line $z={\rm const}$ (in particular $z=1$) is a singularity.
In view of theorem \ref{painleve'}, note that $w$ can be analytically continued
along the arc $\gamma $ defined on $[0,1)$ by $\gamma(t)=2-t$ and
there does exist $\lim_{t\to 1}[\log(1-t)]/(2-t)=\infty$.

\subsection{No limit}
\label{leading1}

Consider the following Cauchy problem: 
$$
\begin{cases}
\displaystyle
w'(z)=-{\sqrt{1-w^2(z)}}/{z^2} \cr
w((1+i)^{-1})=\sin(1+i+c).
\end{cases}
$$ 

Here we suppose $\vert c\vert$ small enough and understand the choice of the positive branch of the square root
corresponding to the initial values $((1+i)^{-1},\ \sin(1+i+c))$.
In the terminology of section \ref{multivalued known terms}, we have that:
\begin{itemize}
\item 
the branch locus is $\mathcal{B}=\{(z,w)\in\CI^2: {w=\pm 1\}}$, 
the Riemann domain of the known term is 
$\Delta=\{(z,w,y)\in\CI^3:w^2+y^2=1,\ y\not=0\} $, with the projection mapping
$p(z,w,y)=(z,w)$, a twofold covering, hence a proper mapping;
\item
the singularity locus is $\Sigma = \{(z,w,y)\in\Delta:z=0\}$;
\item
the known term $F:\Delta \setminus \Sigma\to\CI$ is defined by 
$F(z,w,y)=-y/z^2$;
\item
the lifted initial point is $X_0=(1/\pi, -\sin(c), -\cos(c))\in\Delta\setminus\Sigma$; 
note that $p(X_0)=(z_0,w_0)=(1/\pi, -\sin(c))$;
\item
$\eta (z,w)= (z,w,-\sqrt{1-w^2})$, where the positive branch of the square root has been
chosen.
\end{itemize}
The singularities of the equation in the underlying $\CI^2$ lie on 
$\{z=0\}\cup\{w=\pm 1\}$.
The problem is solved by $w(z)=\sin(1/z+c)$, showing an essential singularity at $z=0$. 
Note that $w$ can be analytically continued
along the arc $\gamma $ defined on $[0,1)$ by $\gamma(t)= (1-t)(1+i)^{-1}$ and there does not exist
$\lim_{t\to 1}\sin(1/\gamma(t)+c)$; in view of theorem \ref{painleve'}, this should be compared with the fact that $\{z=0\}\subset\CI^2$ is a line of 
poles for the known term.

\subsection{Solution with logarithmic singularity}
\label{Solution with logarithmic singularity}
In the Cauchy problem 
$$
\begin{cases}
\displaystyle
w'(z)=e^{-w(z)}\left({1+\sqrt[3]{e^{w(z)}-z+1}}\right)/2\cr
w(1)=0,
\end{cases}
$$ 
we understand 
the choice of the positive branch of the cube root corresponding to the initial values 
$(1,0)$.
In the terminology of section \ref{multivalued known terms}, we have:
\begin{itemize}
\item 
the branch locus is $\mathcal{B}=\{(z,w)\in\CI^2: e^w -z+1=0\}$, 
the Riemann domain of the known term is 
$\Delta=\{(z,w,y)\in\CI^3:e^w-z+1=y^3, \ y\not=0\} $, with the projection mapping
$p(z,w,y)=(z,w)$, a threefold covering, hence a proper mapping;
\item
the singularity locus $\Sigma $ is empty, indeed 
the known term $F:\Delta\to\CI$, defined by 
$F(z,w,y)=e^{-w}(1+y)/2$ is holomorphic on the whole of $\Delta$;
\item
the lifted initial point is $X_0=(1,0,1)\in\Delta$; 
note that $p(X_0)=(z_0,w_0)=(1,0)$;
\item
$\eta (z,w)= (z,w,\sqrt[3]{e^{w(z)}-z+1})$, where the positive branch of the cube root has been
chosen.
\end{itemize}
The singularities of the equation in the underlying $\CI^2$ lie on the curve
$e^{w}-z=-1$.
The problem is solved by 
$w(z)=\log z$, which can be analytically continued
along the arc $\gamma $ defined on $[0,1)$ by 
$\gamma(t)=1-t$. In view of theorem \ref{painleve'}, note that the complex line ${z=0}$
is not a singularity for the differential equation and 
there does exist $\lim_{t\to 1}\log(1-t)=\infty$.

\subsection{A separable 'multivalued' problem}

Consider the following Cauchy problem: 
$$
\begin{cases}
\displaystyle
w'(z)={\sqrt{z}}/{\sqrt{w(z)}} \cr
w(1)=(1+c)^{2/3}.
\end{cases}
$$ 

We have supposed $\vert c\vert$ positive, real and small enough; we have 
chosen the positive branches of the square and cube roots
corresponding to the initial values 
$(1,(1+c)^{2/3})$.

As in the preceeding examples, the Riemann domain
of the known term ${\sqrt{z}}/{\sqrt{w}}$  is proper; 
the underlying singularities of the equation are on $\{z=0\}\cup\{w=0\}$.

The problem is solved by $w(z)=(z^{3/2}+c)^{2/3}$, 
showing a 'fixed' algebraic branch point at $z=0$ and a 'movable' one
at $z^{3/2}+c=0$. 
Note that $w$ can be analytically continued
along the arc $\gamma $ defined on $[0,1)$ by 
$\gamma(t)=1-t$ (without stumbling on any of the movable branch points $z^{3/2}+c=0$ on this path,
since $(1-t)^{3/2}>0$ and $c>0$) and 
$\lim_{t\to 1}[(1-t)^{3/2}+c)]^{2/3}= c^{2/3}$.

In a different fashion,
$w$ admits the following analytic continuation along a path pointing towards one of the 'movable' branch points:
consider the arc, defined on $[0,1]$:    
$$
\beta(t):=
\begin{cases}
\displaystyle
e^{4\pi it} & \text{if $0\leq t\leq 1/2 $}\cr
2(1-t)+c^{2/3}(2t-1) & \text{if $1/2\leq t\leq 1 $}.
\end{cases}
$$ 
After that $t$ has run on $[0,1/2]$, the analytic continuation of 
$w(z)=(z^{3/2}+c)^{2/3}$ has changed sign from positive to negative; further running the parameter on
$[1/2,1]$ makes $z$ run into $c^{2/3}$, which is this time a branch point for 
$(z^{3/2}+c)^{2/3}$ on this path,
since $[2(1-t)+c^{2/3}(2t-1)]^{3/2}<0$ and $c>0$.
All the same, notice that $\lim_{t\to 1}\{[2(1-t)+c^{2/3}(2t-1)]^{3/2}+c\}^{2/3}=0$
on this branch i.e., the analytic continuation of the solution of our Cauchy problem admits limit even in the above circumstance.

\subsection{Counterexample: a logarithmic known term}

Consider the following (autonomous) Cauchy problem: 
$$
\begin{cases}

\displaystyle
w'(z)=-w(z) \log^2(w(z)) \cr
w(0)=e^{-1/c}.
\end{cases}
\label{noproper}
$$ 

This problem is solved by $w(z)=e^{1/(z-c)}$. Notice that
the multivaluedness in $w$ of the known term is logarithmic.
In the terminology of section \ref{multivalued known terms}, we have:
\begin{itemize}
\item 
the branch locus is $\mathcal{B}=\{(z,w)\in\CI^2: w=0\}$, 
the Riemann domain of the known term is 
$\Delta=\{(z,w,y)\in\CI^3:w=e^y\} $, with the projection mapping
$p(z,w,y)=(z,w)$, which is not a proper covering;
\item
the singularity locus $\Sigma $ is empty, indeed 
the known term $F:\Delta\to\CI$, defined by 
$F(z,w,y)={-w}y^2$ is holomorphic on the whole of $\Delta$;
\item
the lifted initial point is $X_0=(0, e^{-1/c}, -1/c)\in\Delta$; 
note that $p(X_0)=(z_0,w_0)=(0, e^{-1/c})$;
\item
$\eta (z,w)= (z,w,\log w)$, where the real branch of the logarithm has been
chosen.
\end{itemize}
The singularities of the equation in the underlying $\CI^2$ lie on the curve
${w}=0$.
In view of theorem \ref{painleve'}, note that the known term is not resolved by
a proper cover and that the solution has a movable essential singularity; alternatively, 
it can be stated that there exists a path $\gamma$ defined on $[0,1]$, joining $0$ and $c$ and such that
$w$ admits analytic continuation along
$\gamma\vert_{[0,1)}$ but no continuous extension up to $\gamma(1)=c$.
\vskip0.3cm
\noindent
{\bf Remark} This example shows that, in general, the hyphotesis in theorem \ref{painleve'} that the 
multivaluedness of the known term be resolved by a proper cover cannot be dropped; however,
a first order o.d.e.$\!$ with nonproper covering associated to its known term
can yield a family of functions free from movable singularities notwithstanding.
For instance, the equation 
$$
w'(z)=\sqrt{1-w^2(z)}\ \frac{\arcsin(w(z))}{z}
$$
admits the family of solutions $\{\sin(kz)\}_{k\in\Cit}$, which are free from any singularities at all.

\section{The main theorem}

Now we are ready to introduce the main issue of this paper:
in the terminology of section \ref{multivalued known terms}, introduce the
(well defined) 
Cauchy problem:
\begin{equation}
\begin{cases}
u^{\prime}(v)
=F\circ\eta(z, w(z))
\cr
w(z_0)=w_0.
\end{cases}
\label{cauchy}
\end{equation}

Note that the above Cauchy problem is of a 'classical' type, i.e., the known term is defined 
on an open set $\mathcal{U}\subset\CI^2$ and a solution is sought that be a holomorphic function on a 
one-dimensional complex disc and whose graph is contained in $\mathcal{U}$.

Thanks to the classical existence-and-uniqueness theorem
(see e.g.$\!$ \cite{hille}, th 2.2.2, \cite{ince} p.281-284), such a solution does exist.
The problem of its analytic continuation is natural and settles (besides the usual matters dealing with 
the analytic continuation of a function of one complex variable) a supplementary 
question, i.e.$\!$, what happens if the analytic continuation $\omega$ of the graph of the solution leads to 
singularities in the known term i.e., for instance, points where $F\circ\eta $ is not holomorphic? 
Let $\gamma$ be an arc defined on $[0,1]$: if the Riemann domain $(\Delta, p)$ resolving the multivaluedness of $F$ is proper
and the complex line $\{w=\gamma(1)\}$ is not a singularity, theorem \ref{painleve'} answers that 
the feasibility of analytic continuation along $\gamma$ restricted to the semi-open interval $[0,1)$ 
entails the existence of a (finite or infinite) limit for $\omega\circ\gamma$ as the arc parameter tends to $1$.

Now we need a technical lemma:
\begin{lemma}
Let $X$ be a metric space, $\alpha : [0,1)\to X$ a continouous 
arc such that  $\lim_{t\to 1}\alpha (t)$ does not exist
in $X$.
{\tt A)}let $\{x_l\}\to x_{\infty }$ be an injective converging  
sequence in $X$: then there exists a 
sequence $\{t_k\}\to 1$ and an open neighbourhood $U$
of $\{x_l\}\cup \{x_{\infty }\}$ such that 
$\{\alpha (t_k)\}\subset X\setminus U$. 
{\tt B)}for every 
$N-$tuple $\{x_1...x_N\}\subset X$ there exists a sequence
$\{t_i\}\rightarrow b$ and neighbourhoods
$U_k$ of $x_k$ such that $\{\alpha (t_i)\}\subset X\setminus 
\bigcup_{k=1}^N U_k$.
\label{fmetric}
\label{metric}
\end{lemma}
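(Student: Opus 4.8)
The plan is to prove the contrapositive flavour of both parts simultaneously: since $\lim_{t\to 1}\alpha(t)$ fails to exist in $X$, no single point can "absorb" the tail of $\alpha$, and we want to keep extracting parameters that stay away from a prescribed finite or convergent-plus-limit set. For part \texttt{B)}, I would first handle a single point $x\in X$: I claim there is a sequence $\{t_i\}\to 1$ with $\alpha(t_i)$ bounded away from $x$. Indeed, if no such sequence existed, then for every $\varepsilon>0$ there would be $s_\varepsilon<1$ with $\alpha(t)\in B(x,\varepsilon)$ for all $t\in(s_\varepsilon,1)$, which is exactly the assertion $\lim_{t\to 1}\alpha(t)=x$, contradicting nonexistence of the limit. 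Fix for each such point a radius; the subtlety is that the "bad" parameters for different points $x_k$ might not overlap, so a naive intersection of sequences could be empty.

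To circumvent that, I would argue as follows. For the $N$-tuple $\{x_1,\dots,x_N\}$, pick radii $r_k>0$ with the closed balls $\overline{B(x_k,r_k)}$ pairwise disjoint, and set $K=\bigcup_{k=1}^N \overline{B(x_k,r_k/2)}$, a closed set whose complement is a neighbourhood issue only near the $x_k$. Suppose, for contradiction, that $\alpha(t)\in \bigcup_k B(x_k,r_k/2)$ for all $t$ sufficiently close to $1$, say $t\in(s,1)$. The set $(s,1)$ is connected and $\alpha$ is continuous, so $\alpha((s,1))$ is connected; since the balls $B(x_k,r_k/2)$ are pairwise disjoint open sets covering this connected image, $\alpha((s,1))$ lies entirely in a single one, say $B(x_{k_0},r_{k_0}/2)$. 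But then $\alpha$ maps a punctured left-neighbourhood of $1$ into a set of diameter $\le r_{k_0}$; applying this argument on a nested sequence of left-neighbourhoods and shrinking radii (using that within one ball we may repeat the dichotomy against $x_{k_0}$ alone via the single-point case above) forces $\lim_{t\to 1}\alpha(t)=x_{k_0}$, a contradiction. Hence there are parameters arbitrarily close to $1$ with $\alpha(t)\notin\bigcup_k B(x_k,r_k/2)$; choosing $U_k:=B(x_k,r_k/2)$ and extracting a sequence $t_i\to 1$ gives \texttt{B)}.

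For part \texttt{A)}, the set $\{x_l\}\cup\{x_\infty\}$ is compact, so it can be covered by finitely many small balls; morally one wants to run \texttt{B)} on those centres, but the cover must be chosen so that its union is an \emph{open neighbourhood} $U$ of the whole compact set while still leaving room for $\alpha(t_k)$ to escape. I would proceed directly instead: suppose $\alpha(t)$ eventually lies in every neighbourhood of $\{x_l\}\cup\{x_\infty\}$. Since $\{x_l\}\cup\{x_\infty\}$ is compact and $\alpha((s,1))$ is connected, and (by injectivity of $\{x_l\}$) the points $x_l$ together with $x_\infty$ admit, for any prescribed tolerance, a disjoint family of balls around all but finitely many of them with the remaining finitely many handled by \texttt{B)}, one again pins $\alpha$ near a single cluster point and deduces a genuine limit — contradiction. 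Then take the escaping sequence $\{t_k\}$ and let $U$ be the corresponding neighbourhood.

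The main obstacle is the bookkeeping in \texttt{A)}: turning "escapes every open neighbourhood of a compact set" into "converges to one point" requires combining the connectedness-of-the-image trick with compactness of $\{x_l\}\cup\{x_\infty\}$ in a way that does not lose the openness of $U$, and one must be careful that shrinking radii around infinitely many points $x_l$ accumulating at $x_\infty$ is done so that their union is still open and still a neighbourhood. Everything else (the single-point dichotomy, the connectedness argument) is routine once that covering is set up correctly.
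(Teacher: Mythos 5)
Your overall strategy---the single-point dichotomy combined with connectedness of $\alpha((s,1))$ against a disjoint union of balls---is the same one the paper uses, but the order in which you fix the radii creates a genuine gap in part \texttt{B)}. You choose the radii $r_k$ using only disjointness of the closed balls, and then claim that the trapped case $\alpha((s,1))\subset B(x_{k_0},r_{k_0}/2)$ ``forces $\lim_{t\to 1}\alpha(t)=x_{k_0}$,'' hence that there are parameters arbitrarily close to $1$ with $\alpha(t)\notin\bigcup_k B(x_k,r_k/2)$. Both assertions fail: take $X=\ER$, $N=1$, $x_1=0$, $r_1=1$ and $\alpha(t)=\frac14\sin(1/(1-t))$; the limit does not exist, yet $\alpha(t)\in B(0,1/2)$ for every $t$, so no parameter escapes the pre-chosen ball. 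Being eventually trapped in a ball of radius $r_{k_0}/2$ does not force convergence to $x_{k_0}$, and you cannot rerun the connectedness argument with shrunken radii because your contradiction hypothesis concerns only the original ones. The repair---which is exactly the paper's proof---is to choose each neighbourhood adaptively \emph{before} the connectedness step: since $x_k$ is not the limit, your own single-point dichotomy yields a ball $U_k=B(x_k,\varepsilon_k)$ in which the arc is not eventually trapped; after shrinking to make these pairwise disjoint, the connected set $\alpha([\lambda,1))$ cannot lie in the disconnected union $U=\bigcup_k U_k$ because it lies in no single component, and the escaping sequence follows directly, with no contradiction argument needed. (Alternatively, your version is salvaged by replacing only $U_{k_0}$ with the smaller ball produced by the dichotomy at $x_{k_0}$, instead of keeping $B(x_{k_0},r_{k_0}/2)$.)

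Part \texttt{A)} inherits the same flaw in the phrase ``deduces a genuine limit,'' and in addition leaves unresolved the one point that makes it work, which you defer as ``bookkeeping'': since $x_l\to x_\infty$, any sufficiently small neighbourhood $V_\infty$ of $x_\infty$ (chosen, again, so that the arc is not eventually trapped in it) already absorbs all but finitely many of the $x_l$, so only finitely many further neighbourhoods $V_1,\dots,V_N$ are needed and the finite disjointification of \texttt{B)} applies verbatim; the union $U=V_1\cup\dots\cup V_N\cup V_\infty$ is then automatically an open neighbourhood of the whole set. As written, the proposal assembles the right ingredients but in an order that produces a false intermediate claim, so it is not yet a proof.
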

\begin{proof}
{\tt A)} Since none of the $\{x_l\}$'s ($l\in\EN\cup \{\infty \}$) 
is $\lim_{t\to 1}\gamma(t) $, we have that for every $l\in\EN\cup \{\infty \}$ 
there exists an open neigbhourhooud
$V_l$ of $x_l$ such that 
$
\alpha ([\lambda ,1))\not\subset V_l
$ 
for every $\lambda \in[0,1)$; moreover, up to shrinking $V_{\infty }$, 
there exixts $N>1$ such that $n>N\Rightarrow x_n\in   V_{\infty }$
but $x_N\not\in   V_{\infty }$. Clearly we can choose the 
$\{V_l\}$'s in such a way that: $V_i\cap V_k=\emptyset$
if $i,k\in\EN$ and $i\not=k$; $V_i\cap V_{\infty }=\emptyset $ if $l\leq N$.
Let now $U:=(\bigcup_{l=1}^N V_l)\cup V_{\infty }$: 
by construction, $U$ is disconnected. 
Since $\alpha ([\lambda ,1 )) $ is, by contrast, connected, and, by construction, $\alpha ([\lambda ,1 )) $
is not contained in a single connected component of $U$,
we must have $\alpha ([\lambda ,1))\not\subset U$; this entails that, for every $k>0$, the set 
$W_k:=\alpha^{-1}(X\setminus U)\cap(1-1/k,1)$ is not empty;
picking $t_k \in W_k$ ends the proof. 
The proof of {\tt B)} is analogous and will be omitted.
\end{proof}

We also need to generalise Hille's theorem 3.2.1 \cite{hille} to our broader setting:
we will use once more the notation of section \ref{multivalued known terms}.
Note that in the proof of this lemma we are forced to work first in the underlying environments
$\CI$ and $\CI^2$ and to lift the results by local charts into the overlying Riemann surface $R$ and domain $\Delta$. 
This is why derivation is defined on $\CI_z$ and the analytic continuation of the solution
takes values in $\CI_w$. Also, the Taylor developments (\ref{taylor}) are feasible using local charts in $\CI^2$ 
and not directly in the complex manifold $\Delta$. A similar approach is implicit in Hille's proof.

\begin{lemma}
\label{single sequence} 
Let the Cauchy problem (\ref{cauchy}) be given. 
Suppose that $\gamma\colon [0,1]\rightarrow\CI$
is an arc starting at the initial point $z_0$ and that an analytic continuation 
$\left(R,\pi,j,\omega\right)$ of the initial solution $w$
can be carried out along $\gamma\vert_{[0,1)}$; let $\widetilde\gamma :[0,1)\to R$ be the lifted arc of
$\gamma $ with respect to the natural projection $\pi $.
Consider the arc 
$\theta:=\gamma
\times
[\omega \circ 
\widetilde\gamma]
:[0,1)\to\CI^2$; suppose that the initial known term $F\circ\eta$ can be analytically continued along
$\theta$ and let 
$\widetilde\theta:[0,1)\to\Delta$ be the lifted arc with respect to the natural projection $p:\Delta\to\CI^2$.
Finally, suppose that there exists a sequence $\{t_k\}\to 1$ such that
$\{(\widetilde\theta(t_k)\}$ converges to $\vartheta\in\Delta$ and that 
the known term $F$ is holomorphic at $\vartheta$.
Then the initial solution $w$ admits an analytic continuation 
along  $\gamma$ up to the endpoint $\gamma(1)$.
\end{lemma}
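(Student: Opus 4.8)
The plan is to reconstruct the classical Hille-type argument (\cite{hille}, th.$\!$~3.2.1) on the underlying $\CI^2$, using the hypothesis that $F$ is holomorphic at $\vartheta$ to obtain uniform local control of the vector field near the limit point, and then to lift the resulting local solution back into $\Delta$ by a local inverse of $p$. First I would set $(\zeta_\infty,\omega_\infty):=p(\vartheta)\in\CI^2$ and choose a bidisc $D_1\times D_2$ centred at $(\zeta_\infty,\omega_\infty)$ on which a local inverse $\widehat\eta$ of $p$ is defined with $\widehat\eta(\zeta_\infty,\omega_\infty)=\vartheta$ and on which $F\circ\widehat\eta$ is holomorphic, hence bounded, say by $M$. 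Since $\widetilde\theta(t_k)\to\vartheta$ in $\Delta$ and $p$ is continuous, the points $\theta(t_k)=p(\widetilde\theta(t_k))$ converge to $(\zeta_\infty,\omega_\infty)$ in $\CI^2$; in particular, for $k$ large, $\theta(t_k)$ lies in a fixed smaller bidisc $D_1'\times D_2'\subset\subset D_1\times D_2$.

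The core step is the standard a~priori estimate. Fix such a large $k$ and abbreviate $z_k:=\gamma(t_k)$, $w_k:=\omega(\widetilde\gamma(t_k))$. Consider the Cauchy problem $u'=F\circ\widehat\eta(v,u)$, $u(z_k)=w_k$: by the Cauchy existence theorem with the explicit bound $M$ on $D_1\times D_2$, its solution is holomorphic on a disc $\{|v-z_k|<\rho\}$ where $\rho$ depends only on $M$, $\mathrm{dist}\big((z_k,w_k),\partial(D_1\times D_2)\big)$, and the disc radii — hence, for $k$ large, on a disc of a fixed radius $\rho_0>0$ independent of $k$, and the solution stays inside $D_1\times D_2$. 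By uniqueness this solution agrees near $z_k$ with the continuation $\omega\circ\widehat\eta^{-1}$ already constructed along $\theta|_{[0,t_k]}$ lifted down to $\CI$; composing with $\widehat\eta$ gives an analytic continuation of $w$ along $\gamma$ on a neighbourhood of $z_k$ reaching out to radius $\rho_0$. Since $\gamma(t)\to\gamma(1)$ and $z_k=\gamma(t_k)\to\gamma(1)$, once $k$ is large enough that $|\gamma(1)-z_k|<\rho_0$ we have covered $\gamma(1)$ itself, so $w$ continues analytically up to $\gamma(1)$.

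The main obstacle I expect is \emph{bookkeeping the lifting}, not the analysis: one must check that the locally defined $\widehat\eta$ is genuinely a right inverse of $p$ near $\vartheta$ (this uses that $p$ is a local biholomorphism, $(\Delta,p)$ being a Riemann domain) and that the continuation of $F\circ\eta$ along $\theta$ produced by the hypothesis is, near $t_k$, the same germ as $F\circ\widehat\eta$ — this is where the sequence hypothesis $\widetilde\theta(t_k)\to\vartheta$ in $\Delta$ (as opposed to mere convergence of the projected points in $\CI^2$) is essential, since it pins down which branch of $F$ is being continued. A secondary technical point is that $\rho_0$ must be made uniform in $k$; this is immediate once the $\theta(t_k)$ are confined to $D_1'\times D_2'\subset\subset D_1\times D_2$, because then $\mathrm{dist}\big(\theta(t_k),\partial(D_1\times D_2)\big)$ is bounded below. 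Properness of $(\Delta,p)$ is not needed for this lemma — it will enter only in the proof of Theorem \ref{painleve'}, where one must guarantee that such a convergent subsequence $\widetilde\theta(t_k)$ exists in the first place.
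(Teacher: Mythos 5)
Your proposal is correct and follows essentially the same route as the paper's proof: both derive a uniform lower bound on the radius of convergence of the local solutions based at $(z_k,w_k)=p(\widetilde\theta(t_k))$ from the holomorphy of $F$ at $\vartheta$, and then observe that for $k$ large this disc captures $\gamma(1)$, with the convergence $\widetilde\theta(t_k)\to\vartheta$ in $\Delta$ pinning down the branch. The only cosmetic difference is that you obtain the uniform radius from a sup bound $M$ on a single bidisc around $p(\vartheta)$, whereas the paper passes through the convergence of the Taylor coefficients of the chain of local developments and Hille's explicit radius formula; these amount to the same estimate.
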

\begin{proof}
Let $(z_k, w_k)$ be the coordinates of $p\circ\widetilde\theta(t_k)$ and
$(z_{\infty }, w_{\infty })$ those of $p(\vartheta )$.
Let $\eta_{\infty } $ be the branch of $p^{-1}$ such that 
$\eta_{\infty }(\vartheta )=\vartheta  $ and let 
$$F\circ\eta(z,w)=
\sum_{r,s=0}^{\infty}
c_{r,s}(z-z_k)^r(w-w_k)^s$$
be the Taylor development of $F\circ\eta$ in a bidisc $\DI(z_{\infty }, w_{\infty}, \rho, \sigma)$
around $(z_{\infty }, w_{\infty })$.

The analytic continuation of $F\circ\eta$ along $\widetilde\theta$ can be concretely carried out
in the underlying $\CI^2$ by a chain of bidiscs and Taylor developments
$\{(\Iu_k, F\circ\eta_k )\}_{k\in\pen}$, where 
\begin{equation}
F\circ\eta_k (z,w)=
\sum_{r,s=0}^{\infty}
c_{r,s,k}(z-z_k)^r(w-w_k)^s,\quad k\in\EN,
\label{taylor}
\end{equation}
$
\gamma
\times
[\omega \circ 
\widetilde\gamma]
(t_k)\in \Iu_{N(k)}$ for every $k$ and some stricly increasing function $N:\EN\to\EN$
(which we call the {\it counting function})
and, for each $k$: $\eta_k$ is a local inverse of the projection mapping $p$ and 
$\eta_{N(k)}:\Iu_{N(k)}\to\Delta $ is the local inverse of $p$
such that $\eta_{N(k)}\circ p(\widetilde\theta (t_k))=\widetilde\theta (t_k)$.

By continuity, $\{c_{r,s,k}\}\to c_{r,s}$ for all $r,s$
as $k\to\infty $, hence we can find $a>0$ and $b>0$
such that the developments in (\ref{taylor}) converge absolutely
and uniformly in the closed bidiscs 
$\overline{\DI(u_k, v_k, a, b)}$.
By Cauchy estimates, this implies that there exists $T\in\ER^+$ such that
$\sum_{r,s=0}^{\infty}
\vert c_{r,s,k}\vert a^r b^s<T$ for all $k\in\EN$; by 
classical complex analysis (see e.g.,$\!\,$ \cite{hille},
theorem 2.5.1) all solutions to the Cauchy problems
\begin{equation}
\begin{cases}
\Omega_k^{\,\prime}(z)
=F\circ\eta_k(z,\Omega_k(z))
\cr
\Omega_k(z_k)=w_k.
\end{cases}
, \quad k\in\EN\cup\{\infty \}
\label{cauchylatest}
\end{equation}
have radii of convergence of at least 
$\sigma :=a(1-e^{-b/(2aT)})$, thus (keeping into account that the counting function 
$N$ is strictly increasing) there exists $\ell\in\EN$
such that $\ell\in N(\EN)$, $v_{\infty }\in \DI(v_\ell,\sigma )$; by continuity,
$\Omega_\ell(z_{\infty })=w_{\infty } $. This means
that $\Omega_\ell$ admits analytic continuation along
$\gamma $ up to $z_{\infty }=\gamma (1)$; since, by the hypothesis of the existence of the analytic
continuation of the initial solution $w$ to the Cauchy problem $\ref{cauchy}$, 
we can in turn construct $\Omega_{\ell}$
by starting from  $w$ and carrying out analytic continuation along $\gamma$, 
we can conclude that  $w$ itself admits analytic continuation along $\gamma $ up to 
$z_{\infty }=\gamma (1)$.
\end{proof}

\vskip0.2cm
Finally, here is our main theorem (notation has been set up and discussed in section \ref{multivalued known terms}):

\begin{theorem}
\label{painleve'} 
Let a Cauchy problem 
for a first order ordinary differential equation 
in the complex domain be given. Suppose the singularities
of the differential equation
to be
contained in a complex-analytic
curve $\mathcal{S}\subset \CI^2$.
Let $\gamma\colon [0,1]\rightarrow\CI$
be an arc starting at the initial point $z_0$ such that the complex line 
$z=\gamma(1)$ is not contained  in $\mathcal{S}$.
Suppose that an analytic continuation 
$\left(R,\pi,j,\omega\right)$ of the initial solution $w$
can be obtained along $\gamma\vert_{[0,1)}$; let
$\widetilde\gamma :[0,1)\to R$ the lifted arc of
$\gamma $ with respect to the natural projection $\pi $: then
there exists (finite or infinite)
$
\lim_{t\to 1} \omega\circ\widetilde\gamma(t)
$.
\end{theorem}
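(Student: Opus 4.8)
The plan is to argue by contradiction: assume that $\lim_{t\to 1}\omega\circ\widetilde\gamma(t)$ does not exist in $R$ (equivalently, since $\pi$ is a local homeomorphism and $\gamma(1)$ is a fixed point in $\CI$, that $\lim_{t\to 1}(\omega\circ\widetilde\gamma)(t)$ does not exist in $\CI_w$, so that the arc $\theta:=\gamma\times[\omega\circ\widetilde\gamma]:[0,1)\to\CI^2$ has no limit as $t\to 1$). The strategy is then to produce a point $\vartheta\in\Delta$ and a sequence $\{t_k\}\to 1$ with $\widetilde\theta(t_k)\to\vartheta$ and $F$ holomorphic at $\vartheta$; Lemma \ref{single sequence} then extends $w$ analytically past $\gamma(1)$, contradicting the non-existence of the limit. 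So the whole burden is to manufacture such a $\vartheta$.

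First I would set up the arc $\theta$ and its lift $\widetilde\theta:[0,1)\to\Delta$. Because $\{z=\gamma(1)\}$ is not contained in $\mathcal{S}$, the set $\mathcal{S}\cap\{z=\gamma(1)\}$ is discrete in that line; together with the behaviour at $w=\infty$ this means the fibre over $\gamma(1)$ in $\CI\times\PI^1$ meets the closure of the singularities only in finitely many points. The key geometric step is to control the image of $\theta$ near the fibre $\{z=\gamma(1)\}$: pass to $\CI\times\PI^1$, where that fibre is compact, so $\overline{\theta([0,1))}\cap(\{\gamma(1)\}\times\PI^1)$ is a nonempty compact set. If this cluster set were a single point we would be done (that point, pulled back, gives a limit); so I assume it contains at least two points. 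Now invoke Lemma \ref{metric}, part {\tt B)}: given finitely many ``bad'' points $x_1,\dots,x_N$ — namely the finitely many points of $(\{\gamma(1)\}\times\PI^1)$ lying over singularities, together with the point at $w=\infty$ if that is a singular direction — there is a sequence $\{t_i\}\to 1$ with $\theta(t_i)$ staying outside fixed neighbourhoods $U_k$ of each $x_k$. Passing to a subsequence by compactness of $\CI\times\PI^1$ minus those neighbourhoods, $\theta(t_i)$ converges to some $(\gamma(1),w_\infty)$ with $w_\infty\in\CI$ and $(\gamma(1),w_\infty)\notin\mathcal{S}$ — in particular not on the branch locus $\mathcal{B}$, so $p^{-1}$ is defined near it.

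The next step is to lift this convergence to $\Delta$. Since $(\gamma(1),w_\infty)\notin\mathcal{B}$ and $(\Delta,p)$ is a \emph{proper} covering of $\CI^2\setminus\mathcal{B}$, the fibre $p^{-1}(\gamma(1),w_\infty)$ is finite and $p$ is a covering map in a neighbourhood; thus $\{\widetilde\theta(t_i)\}$, being eventually contained in a compact set mapping properly onto a neighbourhood of $(\gamma(1),w_\infty)$, has a convergent subsequence $\widetilde\theta(t_{i_k})\to\vartheta\in\Delta$ with $p(\vartheta)=(\gamma(1),w_\infty)$. Because $(\gamma(1),w_\infty)$ avoids the singularity locus $p(\Sigma)$ as well, $\vartheta\notin\Sigma$, hence $F$ is holomorphic at $\vartheta$. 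This is exactly the hypothesis of Lemma \ref{single sequence}, which yields analytic continuation of $w$ along all of $\gamma$ up to $\gamma(1)$; since continuation of $\omega\circ\widetilde\gamma$ up to an endpoint forces the existence of the limit there, we reach the desired contradiction. The main obstacle, and the place where properness is used essentially (cf.\ example (\ref{noproper})), is precisely this lifting step: without properness of $p$ the convergent sequence downstairs need not have any convergent lift, the fibre over the cluster point can fail to be compact, and $\widetilde\theta(t_k)$ can escape to infinity in $\Delta$ even though $\theta(t_k)$ converges — which is how the logarithmic counterexample evades the conclusion. A secondary technical point to handle carefully is ensuring the cluster point $w_\infty$ can be taken finite and off $\mathcal{S}$ simultaneously; this is where the discreteness of $\mathcal{S}\cap\{z=\gamma(1)\}$ and part {\tt B)} of Lemma \ref{metric}, applied to the finite list of forbidden points, do the work.
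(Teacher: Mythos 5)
Your overall strategy --- argue by contradiction, use Lemma \ref{metric} to trap a sequence $\theta(t_k)$ in a compact set away from the singular fibre over $\gamma(1)$, use properness of $p$ to extract a convergent lifted subsequence $\widetilde\theta(t_{k_j})\to\vartheta$ with $F$ holomorphic at $\vartheta$, then conclude via Lemma \ref{single sequence} --- is exactly the paper's. There are, however, two concrete gaps.

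First, you assert that the fibre $\{\gamma(1)\}\times\PI^1$ meets the closure of the singularities in only finitely many points, and accordingly you invoke only part {\tt B)} of Lemma \ref{metric} with a finite list of forbidden points. This is false in general: $\mathcal{S}$ is only required to be complex-analytic, not algebraic (the paper stresses that $\mathcal{B}$ need not be algebraic), so $P=\{w\in\CI:(\gamma(1),w)\in\mathcal{S}\}$ is discrete but may be countably infinite, with closure $P\cup\{\infty\}$ in $\PI^1$. If you excise only finitely many neighbourhoods, your cluster point $w_\infty$ can perfectly well land on one of the remaining points of $P$, and the argument collapses. This is precisely why the paper's lemma has part {\tt A)}, applied to the injective convergent sequence $P\cup\{\infty\}$ in $\PI^1$: it yields a sequence $\{t_k\}\to 1$ avoiding a neighbourhood of the \emph{whole} of $P\cup\{\infty\}$ at once, after which only the finitely many discs $\DI(\lambda_\nu,\varepsilon)$, $\lambda_\nu\in Q$, need to be removed from $\overline{\DI(0,r)}$ to build the compact set $W$.

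Second, you ``set up the arc $\theta$ and its lift $\widetilde\theta$'' without justifying that the lift exists. The graph arc $\theta=\gamma\times[\omega\circ\widetilde\gamma]$ may pass through the branch locus $\mathcal{B}$, hence leave $p(\Delta)$, so that no lift is available along the given arc; this actually happens in example \ref{leading}, where the graph of $\sin z$ crosses $w=\pm1$. The paper first shows that the set of $\zeta\in R$ with $A(\pi(\zeta),\omega(\zeta))=0$ is discrete (identity principle, using $(z_0,w_0)\notin\mathcal{S}$), then replaces $\gamma$ by a nearby homotopic arc so that $\theta([0,1))\cap\mathcal{S}=\emptyset$; only then can $F\circ\eta$ be analytically continued along $\theta$ and the lift $\widetilde\theta$ be defined --- and this continuability is itself a hypothesis of Lemma \ref{single sequence}, so it cannot be skipped. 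Your identification of properness as the crux of the lifting step, and of how the logarithmic counterexample evades the conclusion, is correct.
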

\begin{proof}
Let $\mathcal{B}$, $(\Delta , p)$, $\eta$, $\Sigma$, $(z_0,w_0)$
have the same meaning as discussed in section \ref{multivalued known terms}.
In particular, recall that $\eta $ is a local inverse of $p$, defined  in a 
bidisc $\DI_1\times\DI_2 $ around $(z_0,w_0)$ and with value in the Riemann domain
$\Delta$ resolving the multivaluedness of the known term; hence, viewed from the
underlying $\CI^2$, $F\circ\eta$ is a branch of the multivalued function $F$.
Also, recall that our Cauchy problem is:
$$
\begin{cases}
u^{\prime}(v)
=F\circ\eta(z, w(z))
\cr
w(z_0)=w_0.
\end{cases}
$$

Now
$\mathcal{S}$ is complex-analytic and $\{z=\gamma(1)\}\not\subset \mathcal{S}$,
so 
$$P:=\{w\in\CI : (\gamma(1),w)\in \mathcal{S}\}$$
is discrete in $\CI_w$; hence we can suppose it to be indexed over $\EN$ or a finite subset.

Suppose now, by contradiction, that $\lim_{t\to 1} 
\omega \circ \widetilde\gamma (t)$ does not exist. 
By lemma \ref{fmetric} {\tt A)} or {\tt B)}, according as $P$ is finite or infinite, 
(with $X=\PI^1$, $\alpha =\omega \circ \widetilde\gamma $, 
$\{x_k\}=P\cup\{\infty\}$), there exist:
a sequence $\{t_k\}\to 1$, 
$r>0$, $\varepsilon >0$
and
a finite subset $Q=\{\lambda_{\nu}\}\subset P$,
such that 
$$\{\omega \circ \widetilde\gamma (t_k)\}
\subset 
\overline{\DI(0,r)}
\setminus 
\bigcup_{\lambda_{\nu} \in Q}
\DI(\lambda_\nu, \varepsilon)
.$$
Now, by continuity, there exists $\varrho >0$
such that 
$$
z\in \DI(\gamma (1), \varrho )
\Rightarrow 
\hbox{\rm pr}_{\Cit_w}
\Big[\mathcal{S}\cap \left(\{z\}\times\CI\right)  \Big]
\subset 
\bigcup_{\lambda_{\nu} \in Q}
\DI(\lambda_\nu, \varepsilon);
$$
Set 
$$W:= 
\overline{
\DI(\gamma (1), \varrho )}
\times 
\left[\overline{\DI(0,r)}
\setminus 
\bigcup_{\lambda_{\nu} \in Q}
\DI(\lambda_\nu, \varepsilon)
\right];
$$ 
by construction $W$ is compact in $\CI^2$ and 
$
W\cap\mathcal{S}=\emptyset
$;
also, we may suppose, without
loss of generality, 
$\{\gamma (t_k)\}\subset \DI(\gamma (1), \varrho )$,
implying in turn
\begin{equation}
\{\gamma \times [\omega \circ \widetilde\gamma](t_k) \}
\subset 
W.
\label{dentw}
\end{equation}
Let now $A$ be the holomorphic function on $\CI^2$
such that $\mathcal{S}=A^{-1}(0)$;
the set 
\begin{equation}
\label{discrete}
\mathcal{B}:=
\{\zeta \in R:
A(\pi(\zeta), {\omega }(\zeta )  )=0
\}
\end{equation}
is discrete for otherwise we would have 
$A(\pi(\zeta), {\omega }(\zeta )  )\equiv 0$ for 
all $\zeta \in R$ 
contradicting the hypothesis that $(z_0,w_0)\not\in \mathcal{S}$.
Thus, by continuity. $\widetilde\gamma^{-1}(\mathcal{B})$ is discrete and, by 
(\ref{dentw}),  $\widetilde\gamma^{-1}(\mathcal{B})\cap\{t_k\}$ is finite.
Therefore, by passing to a nearby homotopic arc
if needed, we may suppose 
$\widetilde\gamma([0,1))\cap\mathcal{B}=\emptyset $, implying
\begin{equation}
\gamma
\times
[\omega \circ 
\widetilde\gamma]
([0,1))\cap\mathcal{S}=\emptyset
.
\nonumber
\end{equation}
Hence $F\circ\eta $ admits regular analytic continuation along
$\theta :=
\gamma
\times
[\omega \circ 
\widetilde\gamma]
:[0,1)\to\CI^2$. 
Let $\widetilde\theta :[0,1)\to \Delta  $
be the lifted arc with respect to the projection mapping
$p $. 
By construction and by (\ref{dentw}) we have
$$
\{\widetilde\theta(t_k)\}
\subset 
p^{-1}\left(\{\gamma
\times
\omega \circ 
\widetilde\gamma (t_k))\}\right) 
\subset   
p^{-1}(W).
$$
Since $W$ is compact and $p$ is proper, $p^{-1}(W)$ is compact;
thus, by maybe passing to a subsequence, we may assume
that $\{\widetilde\theta(t_k)\}$
converges to a limit 
$\vartheta  \in p^{-1}(W)\subset \Delta \setminus \Sigma $;  
note that $F$ is holomorphic at $\vartheta $; now a direct application of lemma
\ref{single sequence} allows us to conclude that $w$ admits analytic continuation
up to $\gamma(1)$.
This fact contradicts the hypothesis that 
$
\lim_{t\to 1}\omega\circ\widetilde\gamma(t)
$
does not exist.
\end{proof}
\vskip0.3cm
\noindent
{\bf Remark} Theorem \ref{painleve'} immediately implies that none of the singularities of the solution of the Cauchy problem
(\ref{cauchy}) are essential, for if $\zeta$ were such a singularity, there would exists a path 
$\gamma:[0,1]\to\CI$ connecting $z_0$ to $\zeta$ such that $w$ admits analytic continuation along
$\gamma|_{[0,1)}$ and no continuous extension up to $\gamma(1)$.

\section{Acknowledgement}
The author wishes to thank the referee for several valuable remarks 
and criticisms, which helped to have the quality of the manuscript substantially improved; 
in particular, but with no limitations, the author is grateful for his/her suggestion of including 
counterexample \ref{noproper}.


\begin{thebibliography}{10}

\bibitem{bieberbach}
Ludwig Bieberbach
{\it Theorie der gew\"ohnlichen Differentialgleichungen auf funktionentheoretischer Grundlage dargestellt}
{Springer (Berlin), 1953}
%
\bibitem{gunros}
{Robert C. Gunning, Hugo Rossi}
{\it 'Analytic functions of several complex variables'}
{Prentice Hall, 1965}
%
\bibitem{hille}
Einar Hille,
{\it 'Ordinary differential equations in the complex domain',}
{John Wiley \& sons, 1976}
%
\bibitem{ince}
E.L.Ince,
{\it 'Ordinary differential equations'}
{Dover, 1956 (originally published in 1926)}
%
\bibitem{painleve'}
Paul Painlev\'e,
{\it 'Sur les lignes singulières des fonctions analytiques'}
{Annales de la facult\'e des sciences de Toulouse, S\'er. 1, 2 (1888)}
%
\bibitem{picard}
E.Picard,
{\it 'Trait\'e d'analyse', Tome II}
{Gauthier-Villars et fils, Paris, 1893}
%

\bibitem{steinmetz} 
Norbert Steinmetz
{\it Book review of 'Painlev\'e differential equations in the complex plane, by V. I. Gromak, I. Laine, and S. Shimomura'} 
{Bulletin (new series) of the American Mathematical Society
Volume 41, Number 4, Pages 523-528}


\end{thebibliography}
\end{document}